\def\NZQ{\mathbb}               
\def\ZZ{{\NZQ Z}}
\def\RR{{\NZQ R}}
\def\CC{{\NZQ C}}
\def\frk{\mathfrak}               
\def\Phi{{\frk N}}
\def\a{\alpha}
\def\opn#1#2{\def#1{\operatorname{#2}}} 
\opn\chara{char} 
\opn\length{\ell} 
\opn\pd{pd} 
\opn\rk{rk}
\opn\projdim{proj\,dim} 
\opn\injdim{inj\,dim} 
\opn\rank{rank}
\opn\depth{depth} 
\opn\grade{grade} 
\opn\height{height}
\opn\embdim{emb\,dim} 
\opn\codim{codim}
\opn\Tr{Tr} 
\opn\bigrank{big\,rank}
\opn\superheight{superheight}
\opn\lcm{lcm}
\opn\trdeg{tr\,deg}
\opn\reg{reg} 
\opn\lreg{lreg} 
\opn\ini{in} 
\opn\lpd{lpd}
\opn\size{size}
\opn\mult{mult}
\opn\dist{dist}
\opn\cone{cone}
\opn\lex{lex}
\opn\rev{rev}
\opn\div{div} \opn\Div{Div} \opn\cl{cl} \opn\Cl{Cl}
\opn\Syz{Syz} \opn\Im{Im} \opn\Ker{Ker} \opn\Coker{Coker}
\opn\Re{Re}
\opn\Am{Am} \opn\Hom{Hom} \opn\Tor{Tor} \opn\Ext{Ext}
\opn\End{End} \opn\Aut{Aut} \opn\id{id} \opn\ini{in}
\opn\nat{nat}
\opn\pff{pf}
\opn\Pf{Pf} \opn\GL{GL} \opn\SL{SL} \opn\mod{mod} \opn\ord{ord}
\opn\Gin{Gin}
\opn\Hilb{Hilb}\opn\adeg{adeg}\opn\std{std}\opn\ip{infpt}
\opn\Pol{Pol}
\opn\sat{sat}
\opn\Var{Var}
\opn\Gen{Gen}
\opn\aff{aff} \opn\con{conv} \opn\relint{relint} \opn\st{st}
\opn\lk{lk} \opn\cn{cn} \opn\core{core} \opn\vol{vol}
\opn\link{link} \opn\star{star}
\opn\gr{gr}
\def\Pc{{\mathcal P}}
\def\pot#1#2{#1[\kern-0.28ex[#2]\kern-0.28ex]}
\opn\dirlim{\underrightarrow{\lim}}
\opn\inivlim{\underleftarrow{\lim}}
\let\to=\rightarrow
\def\Implies{\ifmmode\Longrightarrow \else
        \unskip${}\Longrightarrow{}$\ignorespaces\fi}
\def\implies{\ifmmode\Rightarrow \else
        \unskip${}\Rightarrow{}$\ignorespaces\fi}
\def\iff{\ifmmode\Longleftrightarrow \else
        \unskip${}\Longleftrightarrow{}$\ignorespaces\fi}
\newtheorem{Theorem}{Theorem}[section]
\newtheorem{Proposition}[Theorem]{Proposition}
\newtheorem{Remark}[Theorem]{Remark}
\newtheorem{Definition}[Theorem]{Definition}
\newtheorem{Conjecture}[Theorem]{Conjecture}
\newtheorem{Question}[Theorem]{Question}
\let\epsilon\varepsilon
\let\phi=\varphi
\let\kappa=\varkappa
\def\qed{\ifhmode\textqed\fi
      \ifmmode\ifinner\quad\qedsymbol\else\dispqed\fi\fi}
\def\textqed{\unskip\nobreak\penalty50
       \hskip2em\hbox{}\nobreak\hfil\qedsymbol
       \parfillskip=0pt \finalhyphendemerits=0}
\def\dispqed{\rlap{\qquad\qedsymbol}}
\opn\dis{dis}
\opn\height{height}
\opn\dist{dist}
\def\pnt{{\raise0.5mm\hbox{\large\bf.}}}
\opn\Lex{Lex}
\begin{document}
\title{
Roots of Ehrhart polynomials and \\
symmetric $\delta$-vectors 
}
\author{Akihiro Higashitani}
\thanks{
{\bf 2000 Mathematics Subject Classification:}
Primary 52B20; Secondary 52B12. \\
\;\;\;\; {\bf Keywords:}
Ehrhart polynomial, $\delta$-vector,
Gorenstein Fano polytope. \\
\;\;\;\; 
The author is supported by JSPS Research Fellowship for Young Scientists. 
}
\address{Akihiro Higashitani,
Department of Pure and Applied Mathematics,
Graduate School of Information Science and Technology,
Osaka University,
Toyonaka, Osaka 560-0043, Japan}
\email{a-higashitani@cr.math.sci.osaka-u.ac.jp}

\begin{abstract}
The conjecture on roots of Ehrhart polynomials, 
stated by Matsui et al. \cite[Conjecture 4.10]{MHNOH}, 
says that all roots $\alpha$ of the Ehrhart polynomial of 
a Gorenstein Fano polytope of dimension $d$ satisfy 
$-\frac{d}{2} \leq \Re(\alpha) \leq \frac{d}{2} -1$. 
In this paper, we observe the behaviors of roots of SSNN polynomials 
which are a wider class of the polynomials 
containing all the Ehrhart polynomials of Gorenstein Fano polytopes. 
As a result, we verify that this conjecture is true 
when the roots are real numbers or when $d \leq 5$. 
\end{abstract}

\maketitle

\section*{Introduction}

Let $\Pc \subset \RR^N$ be an integral convex polytope of dimension $d$. 
Given a positive integer $n$, we write $i(\Pc, n) = \sharp(n \Pc \cap \ZZ^N),$ 
where $n \Pc = \{ n \alpha : \alpha \in \Pc \}$. 
In \cite{Ehrhart}, Ehrhart succeeded in proving 
that the numerical function $i(\Pc,n)$ is 
a polynomial in $n$ of degree $d$ with $i(\Pc, 0) = 1$. 
We call $i(\Pc, n)$ the {\em Ehrhart polynomial} of $\Pc$. 

We define the sequence $\delta_0, \delta_1, \ldots, \delta_d$ of integers 
by the formula 
\begin{eqnarray*}
(1 - \lambda)^{d + 1} \sum_{n=0}^{\infty} i(\Pc,n) \lambda^n 
= \sum_{j=0}^d \delta_j \lambda^j. 
\end{eqnarray*}
We call the sequence $\delta(\Pc) = (\delta_0, \delta_1, \ldots, \delta_d)$ 
the {\em $\delta$-vector} of $\Pc$. 
Thus, $\delta_0 = 1$ and $\delta_1 = \sharp(\Pc \cap \ZZ^N) - (d + 1)$. 
It is also known that $\delta_d = \sharp((\Pc \setminus \partial \Pc) \cap \ZZ^N)$, 
where $\partial \Pc$ is the boundary of $\Pc$. 
In particular, $\delta_1 \geq \delta_d$. 
Each $\delta_j$ is nonnegative (\cite{StanleyDRCP}). 
Note that the Ehrhart polynomial can be expressed by using 
the $\delta$-vectors together with the binomial coefficients as follows: 
\begin{eqnarray*}\label{binomform}
i(\Pc,n)=\sum_{j=0}^d\delta_j\binom{n+d-j}{d}. 
\end{eqnarray*}
We refer the reader to \cite[Chapter 3]{BeckRobins} or \cite[Part II]{HibiRedBook} 
for the introduction to the theory of Ehrhart polynomials.

\smallskip 

On roots of Ehrhart polynomials, 
the following conjecture is remarkable: 
\begin{Conjecture}[{\cite[Conjecture 1.4]{BDDPS}}]\label{old}
All roots $\alpha$ of the Ehrhart polynomials of integral convex polytopes 
of dimension $d$ satisfy 
\begin{eqnarray}\label{ineq1}
-d \leq \Re(\alpha) \leq d-1, 
\end{eqnarray}
where $\Re(\alpha)$ denotes the real part of $\alpha \in \CC$. 
\end{Conjecture}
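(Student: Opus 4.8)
The plan is to exploit the representation
\[
i(\Pc, z) = \sum_{j=0}^d \delta_j \binom{z+d-j}{d},
\]
recorded above, together with the nonnegativity $\delta_j \geq 0$ and the normalization $\delta_0 = 1$, and to read off the location of roots factor by factor. Each summand $g_j(z) := \binom{z+d-j}{d} = \tfrac{1}{d!}\prod_{k=0}^{d-1}(z+d-j-k)$ is a real-rooted polynomial whose $d$ roots are the consecutive integers $j-d, j-d+1, \ldots, j-1$, all lying in $[-d,\,d-1]$. Thus $i(\Pc,z)$ is a nonnegative combination of polynomials each of which already obeys the desired bound, and the task is to show that forming such a combination cannot push a root out of the closed strip $S = \{z \in \CC : -d \leq \Re(z) \leq d-1\}$.

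The heart of the matter is a sign-definiteness computation on the real axis, which I would carry out first. If $x \in \RR$ with $x > d-1$, then for every $j$ and every $k \in \{0,\ldots,d-1\}$ the factor satisfies $x+d-j-k \geq x-j+1 \geq x-d+1 > 0$, so $g_j(x) > 0$; since $\delta_0 = 1$ and $\delta_j \geq 0$, we get $i(\Pc,x) \geq g_0(x) > 0$. Symmetrically, if $x < -d$ then every factor satisfies $x+d-j-k \leq x+d < 0$, so each $g_j(x)$ is a product of $d$ negative numbers and hence carries the common sign $(-1)^d$; therefore $i(\Pc,x)$ itself has sign $(-1)^d$ and is nonzero. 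This already settles the conjecture for real roots, and, because the two sides are estimated independently, it does so without invoking any symmetry of the root set. Such a symmetry $\alpha \mapsto -1-\alpha$ is available only in the reflexive (Gorenstein Fano) situation, where $\delta_i = \delta_{d-i}$; in the general case the left and right bounds must be argued separately, exactly as above.

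For the full complex statement I would try to upgrade these sign estimates to a nonvanishing statement on the two closed half-planes $\Re(z) \geq d-1$ and $\Re(z) \leq -d$. Fix $z$ with $\Re(z) > d-1$. Each linear factor $z-r$, with $r \in \{j-d,\ldots,j-1\} \subseteq \{-d,\ldots,d-1\}$, then lies in the open right half-plane, so every $g_j(z)$ is a product of right-half-plane numbers. The plan is to show that the weighted sum $\sum_j \delta_j g_j(z)$ cannot vanish: for instance by an argument-principle count on the boundary line $\Re(z)=d-1$ closed up by a large semicircle, proving that the winding number of $i(\Pc,\cdot)$ about $0$ is zero there, or by establishing that the family $\{g_j\}$ is rigid enough (a stability or interlacing-type property) that no nonnegative combination acquires a zero with $\Re(z) > d-1$.

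The hard part will be precisely this complex case. On the real axis the factors of each $g_j$ are real of a common sign, so the summands cooperate; off the axis this breaks down, because the root sets $\{j-d,\ldots,j-1\}$ of different $g_j$ are spread across the whole interval $[-d,\,d-1]$ and share no common interlacer (already $g_0$ and $g_d$ have disjoint root sets), so for $d \geq 3$ the argument of a product $\prod_r (z-r)$ can exceed $\tfrac{\pi}{2}$ and wind around, destroying the uniform-argument domination that drives the real-axis proof. Overcoming this appears to require genuine structural input beyond the bare nonnegativity of the $\delta$-vector, which is consistent with the conjecture being accessible in general only for real roots and in low dimension, and with the symmetric Gorenstein case, where the extra relations $\delta_i = \delta_{d-i}$ are available, being the natural next target.
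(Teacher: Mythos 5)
The statement you were asked about is not a theorem of this paper at all: it is Conjecture 1.4 of Beck--De Loera--Develin--Pfeifle--Stanley, quoted here as background, and the paper explicitly records that it is \emph{false} in general --- counterexamples are given in \cite{counter} and \cite{OS}. So no complete proof can exist, and the paper offers none; what is known (from \cite{BDDPS} and \cite{BD}) is only the real-root case and the case $d\leq 5$. Your real-axis argument is essentially the correct and standard one for the true part of the statement: writing $i(\Pc,z)=\sum_j \delta_j\binom{z+d-j}{d}$ with $\delta_j\geq 0$, $\delta_0=1$, and checking that for $x>d-1$ every factor of every summand is positive while for $x<-d$ every summand carries the common sign $(-1)^d$, so $i(\Pc,x)\neq 0$ outside $[-d,d-1]$. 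That part is sound.

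The gap is the entire complex case. What you offer there is a plan (an argument-principle count on the line $\Re(z)=d-1$, or a hoped-for stability/interlacing property of the family $\{g_j\}$), not an argument, and your own closing paragraph correctly identifies why the plan cannot be executed: off the real axis the summands $g_j(z)$ do not share a common half-plane of arguments, nonnegative combinations can acquire zeros far outside the strip, and indeed they do --- Remark 1.6 of this paper exhibits a degree-$8$ polynomial with nonnegative symmetric $\delta$-vector whose roots have real part $\approx 3.001$ and $\approx -4.001$, and \cite{OS} produces an actual $34$-dimensional polytope violating even the weaker Gorenstein bound. So the proposal should be read as a correct proof of the real-root half of the conjecture plus an (accurate) diagnosis of why the complex half resists proof; it is not, and cannot be completed into, a proof of the statement as written.
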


It is proved in \cite{BDDPS} and \cite{BD} that 
this conjecture is true when the roots are real numbers or when $d \leq 5$. 
However, this does not hold in general, that is, there exists a counterexample for this conjecture (\cite{counter, OS}). 
Braun \cite{Bra} gives the best possible norm bound of 
roots of Ehrhart polynomials with $O(d^2)$. 

\smallskip

A {\em Fano polytope} is an integral convex polytope $\Pc \subset \RR^d$ of dimension $d$ 
such that the origin of $\RR^d$ is a unique integer point belonging to $\Pc \setminus \partial \Pc$. 
A Fano polytope is called {\em Gorenstein} if its dual polytope is integral. 
Here the dual polytope $\Pc^\vee$ of a Fano polytope $\Pc$ is 
the convex polytope which consists of those $x \in \RR^d$ 
such that $\langle x, y \rangle \leq 1$ for all $y \in \Pc$, 
where $\langle x, y \rangle$ is the usual inner product of $\RR^d$. 
Note that Gorenstein Fano polytope is also said to be a {\em reflexive polytope}. 

Gorenstein Fano polytope is one of the most interesting and important objects 
from viewpoints of both algebraic geometry on toric Fano varieties 
and combinatorics on Fano polytopes. 
Moreover, Gorenstein Fano polytope has a sigfinicant property 
on its Ehrhart polynomial and $\delta$-vector. 
More precisely , for a Fano polytope $\Pc \subset \RR^d$ 
with its $\delta$-vector $(\delta_0,\delta_1,\ldots,\delta_d)$, 
it follows from \cite{Batyrev} and \cite{Hibidual} that 
the following conditions are equivalent:
\begin{itemize}
\item $\Pc$ is Gorenstein; 
\item $\delta_i=\delta_{d-i}$ for $0 \leq i \leq \lfloor \frac{d}{2} \rfloor$; 
\item $i(\Pc, n) = ( - 1 )^d i(\Pc, - n - 1)$. 
\end{itemize}

When $\Pc \subset \RR^d$ is a Gorenstein Fano polytope, 
since $i(\Pc, n) = ( - 1 )^d i(\Pc, - n - 1)$, 
the roots of $i(\Pc, n)$ are distributed symmetrically in the complex plane 
with respect to the line $\Re(z) = - \frac{1}{2}$. 
Thus, in particular, if $d$ is odd, then $- \frac{1}{2}$ is a root of $i(\Pc, n)$. 
It is known \cite[Proposition 1.8]{BHW} that if all roots $\a \in \CC$ of 
$i(\Pc, n)$ of an integral convex polytope $\Pc$ of dimension $d$ 
satisfy $\Re(\a) = - \frac{1}{2}$, then $\Pc$ is unimodularly equivalent 
to a Gorenstein Fano polytope whose volume is at most $2^d$. 
In \cite{HHO}, Gorenstein Fano polytopes whose Ehrhart polynomials have 
a reasonable root distribution are studied. 
In \cite{HK10}, the roots of the Ehrhart polynomials of smooth Fano polytopes 
with small dimensions are completely determined.

It seems to be meaningful to investigate root distributions of Ehrhart polynomials of 
Gorenstein Fano polytopes. 
In \cite{MHNOH}, the following conjecture is also proposed: 

\begin{Conjecture}[{\cite[Conjecture 4.10]{MHNOH}}]\label{new}
All roots $\alpha$ of the Ehrhart polynomials of Gorenstein Fano polytopes 
of dimension $d$ satisfy 
\begin{eqnarray}\label{ineq2}
-\frac{d}{2} \leq \Re(\alpha) \leq \frac{d}{2}-1. 
\end{eqnarray}\end{Conjecture}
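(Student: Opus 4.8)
The plan is to exploit the reflexive functional equation $i(\Pc,n)=(-1)^d i(\Pc,-n-1)$, which makes $i(\Pc,n)$ symmetric about $n=-\tfrac12$, together with the nonnegativity of the $\delta$-vector, and to reduce everything to a lower-degree problem via the substitution $w=n(n+1)$. Since $w$ is invariant under $n\mapsto -n-1$ while $2n+1$ is anti-invariant, the functional equation forces
\[
i(\Pc,n)=\tilde F(w)\quad(d\text{ even}),\qquad i(\Pc,n)=(2n+1)\tilde G(w)\quad(d\text{ odd}),
\]
where $\deg_w\tilde F=d/2$ and $\deg_w\tilde G=(d-1)/2$. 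Every root $\alpha$ is then either $-\tfrac12$ (only when $d$ is odd, where $\Re(\alpha)=-\tfrac12$ lies in the target interval) or given by $\alpha=\tfrac{-1\pm\sqrt{1+4w_0}}{2}$ for a root $w_0$ of $\tilde F$ or $\tilde G$. Writing $z=\sqrt{1+4w_0}$, the bound $-\tfrac d2\le\Re(\alpha)\le\tfrac d2-1$ is equivalent, by symmetry about $-\tfrac12$, to $|\Re(z)|\le d-1$, i.e.\ to
\[
\Re(z)^2=\frac{|1+4w_0|+1+4\Re(w_0)}{2}\le (d-1)^2.
\]
This recasts the whole conjecture as a statement about the location of the roots $w_0$. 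I would be candid that the full conjecture appears out of reach at this level of generality; the realistic targets, matching the abstract, are the real-root case and the case $d\le 5$.

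For real roots I would argue as follows. Because the roots are symmetric about $-\tfrac12$, a real root $\alpha>\tfrac d2-1$ would force the partner $-1-\alpha<-\tfrac d2$ to be a root as well, so it suffices to prove that $i(\Pc,n)>0$ for all real $n\ge \tfrac d2-1$. Using $\delta_j=\delta_{d-j}$ and $\delta_j\ge 0$, I pair the terms of $i(\Pc,n)=\sum_{j=0}^d\delta_j\binom{n+d-j}{d}$ to write $i(\Pc,n)=\tfrac12\sum_{j=0}^d\delta_j\bigl(\binom{n+d-j}{d}+\binom{n+j}{d}\bigr)$, and reduce to the elementary inequality
\[
\binom{n+d-j}{d}+\binom{n+j}{d}\ge 0\qquad\text{for all }n\ge \tfrac d2-1,
\]
with strict positivity when $j=0$, where $\delta_0=1$. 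For $j\le d/2$ the first binomial is already nonnegative on this range, since its largest root $j-1$ satisfies $j-1\le\tfrac d2-1$, so the real content is that it dominates the (possibly negative) second binomial. This domination---essentially a monotonicity/log-concavity estimate for $\binom{\cdot}{d}$ between consecutive integer roots---is the technical heart of the real case, and I expect it to be the main obstacle there.

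For $d\le 5$ I would run the substitution to its conclusion. Here $\deg_w\tilde F\le 2$ and $\deg_w\tilde G\le 2$, so the roots $w_0$ can be written down explicitly by the quadratic formula as functions of the finitely many free entries $\delta_1,\dots,\delta_{\lfloor d/2\rfloor}$ of the symmetric $\delta$-vector. Feeding these into the displayed criterion $\tfrac12\bigl(|1+4w_0|+1+4\Re(w_0)\bigr)\le (d-1)^2$ turns the conjecture, for each fixed $d\in\{1,2,3,4,5\}$, into a finite system of inequalities in the $\delta_j$. It then remains to check that the nonnegativity of the $\delta_j$---supplemented, if necessary, by the additional linear inequalities valid for $\delta$-vectors of Gorenstein Fano polytopes---confines $w_0$ to the region cut out by that criterion. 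For $d=2,3$ (with $\tilde F,\tilde G$ linear in $w$) this is immediate, while $d=4,5$ require analyzing the two possibly complex roots of a quadratic in $w$ and verifying the bound in both the real- and complex-discriminant cases.

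The genuine obstruction to the general conjecture is exactly what makes $d\le 5$ tractable: once $\lfloor d/2\rfloor\ge 3$ the auxiliary polynomial $\tilde F$ (or $\tilde G$) has degree at least $3$ in $w$, no closed-form control of its complex roots is available, and the region $\tfrac12\bigl(|1+4w_0|+1+4\Re(w_0)\bigr)\le (d-1)^2$ is no longer obviously respected by the $\delta$-vector inequalities. This is consistent with the failure of the analogous Conjecture~\ref{old} for large $d$ through complex roots, and it is why I would attempt only the real and low-dimensional regimes claimed in the abstract rather than the full statement.
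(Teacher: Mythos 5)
First, a point of framing: the statement you were asked to prove is a \emph{conjecture} that the paper does not prove --- indeed the paper itself records (citing \cite{OS}) that it fails for a Gorenstein Fano polytope of dimension $34$. So your decision to attempt only the real-root case and $d\le 5$ is the right one, and your overall strategy --- exploit the functional equation $i(\Pc,n)=(-1)^d i(\Pc,-n-1)$, center at $-\tfrac12$, and pass to a polynomial in the symmetric variable $w=n(n+1)$ (equivalently $X=(n+\tfrac12)^2$, which is what the paper uses) --- is exactly the route the paper takes in proving Theorem \ref{main}, and your criterion $\tfrac12\bigl(|1+4w_0|+1+4\Re(w_0)\bigr)\le(d-1)^2$ is the same as the paper's condition $\sqrt{(r+r\cos\theta)/2}\le\tfrac{d-1}{2}$.

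However, both halves of your argument stop precisely at the point where the actual work begins. In the real case you reduce correctly to the inequality $\binom{n+d-j}{d}+\binom{n+j}{d}\ge 0$ for $n\ge\tfrac d2-1$, but you explicitly leave it unproved (``the main obstacle''), and the ``monotonicity/log-concavity'' heuristic you gesture at is not how it goes. The paper's proof is cleaner: after the shift $n\mapsto n-\tfrac12$ the two products share the common factor $\prod_{l=0}^{2i-1}(n-\tfrac12+i-l)$, which is manifestly positive for $n>\lfloor\tfrac d2\rfloor-\tfrac12$, and the remaining cofactor has the form $\prod_j(n+c_j)+\prod_j(n-c_j)$ with all $c_j>0$; expanding, the terms with an odd number of $-c_j$'s cancel against their positive counterparts, so this cofactor has nonnegative coefficients and is positive for $n>0$. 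Without some such argument your real case is incomplete. Likewise, for $d=4,5$ you correctly identify that one must locate the complex roots of an explicit quadratic $G(X)$ in terms of $\delta_1,\delta_2\ge 0$, but you do not carry out the verification; in the paper this is the bulk of the proof (the discriminant conditions \eqref{condi1} and \eqref{condi2} and the ensuing estimates bounding $\tfrac{210-30b+9c}{2+2b+c}$, resp.\ $\tfrac{1689-71b+9c}{1+b+c}$), and it is a genuinely delicate computation, not a routine check --- note that the analogous statement already fails for SSNN polynomials at $d=8$ (Remark \ref{counter}), so nonnegativity plus symmetry alone cannot ``obviously'' confine $w_0$ and the inequalities really must be pushed through. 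As it stands, your proposal is a faithful outline of the paper's method with its two essential lemmas left open.
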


This conjecture says that if we restrict the objects 
to Gorenstein Fano polytopes in Conjecture \ref{old}, 
then the range \eqref{ineq1} of real parts becomes half. 
We note that there also exists a certain counterexample 
of dimension 34. (See \cite{OS}.)

\smallskip

On many results of the studies on roots of Ehrhart polynomials, 
Stanley's nonnegativity of $\delta$-vectors \cite{StanleyDRCP} 
plays a crucial role (\cite{Bra, BD}). 
Derived from the definition \cite[Definition 1.2]{BD}, 
we define the following polynomial. 
\begin{Definition}{\em 
Given a sequence of nonnegative real numbers 
$(\delta_0,\delta_1,\ldots,\delta_d) \in \RR^{d+1}_{\geq 0}$ such that these numbers are symmetric, i.e., 
$\delta_i=\delta_{d-i}$ for $0 \leq i \leq \lfloor \frac{d}{2} \rfloor$, 
we define the polynomial 
$$f(n)=\sum_{i=0}^d \delta_i\binom{n+d-i}{d}$$ in $n$ of degree $d$. 
We call $f(n)$ a {\em symmetric Stanley's nonnegative} or 
{\em SSNN} polynomial of degree $d$. 
}\end{Definition}
In \cite[Remark 2.2]{Pfe}, this family of polynomials is mentioned, 
although it is not pursued deeply there.

As developed in \cite{BD}, studying roots of a wider class of Ehrhart polynomials is 
sometimes convenient and crucial. 
Also in this paper, we study roots of SSNN polynomials and consider the following question. 

\begin{Question}\label{q}
Do all roots $\alpha$ of an SSNN polynomial of degree $d$ satisfy 
$$- \frac{d}{2} 
\leq \Re(\alpha) \leq \frac{d}{2} -1 \; ?$$ 
\end{Question}

This is true 
when the roots are real numbers or when $d \leq 5$. In fact, 
\begin{Theorem}\label{main}
Let $f(n)$ be an SSNN polynomial of degree $d$ 
and $\alpha \in \CC$ an arbitrary root of $f(n)$. \\
{\em (a)} If $\alpha \in \RR$, then $\alpha$ satisfies 
$- \frac{d}{2} \leq \alpha \leq \frac{d}{2} -1$, more strictly, 
$-\left\lfloor \frac{d}{2} \right\rfloor 
\leq \a \leq \left\lfloor \frac{d}{2} \right\rfloor -1.$ \\
{\em (b)} If $d \leq 5$, then $\alpha$ satisfies 
$-\frac{d}{2} \leq \Re(\a) \leq \frac{d}{2} -1$, 
more strictly, 
$-\left\lfloor \frac{d}{2} \right\rfloor 
\leq \Re(\alpha) \leq \left\lfloor \frac{d}{2} \right\rfloor -1.$ 
\end{Theorem}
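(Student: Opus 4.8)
The plan is to exploit the functional equation $f(x)=(-1)^d f(-1-x)$, which is equivalent to the symmetry $\delta_i=\delta_{d-i}$ and forces the roots to be placed symmetrically about the line $\Re(z)=-\tfrac12$. Since the target interval $[-\lfloor d/2\rfloor,\lfloor d/2\rfloor-1]$ is itself symmetric about $-\tfrac12$, in both (a) and (b) it suffices to bound real parts from above, i.e.\ to prove $\Re(\alpha)\le \lfloor d/2\rfloor-1$; the lower bound then follows automatically by reflecting a root $\alpha$ to the root $-1-\alpha$. (The degenerate case $d=1$, whose only root is $-\tfrac12$, satisfies the non-strict bound and will be set aside.)

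For (a) I would group the defining sum into symmetric pairs. Writing $h_i(x)=\binom{x+d-i}{d}+\binom{x+i}{d}$ for $0\le i<d/2$, together with the single middle term $\delta_{d/2}\binom{x+d/2}{d}$ when $d$ is even, we have $f(x)=\sum_i \delta_i h_i(x)+(\text{middle})$. The middle binomial has largest root $d/2-1=\lfloor d/2\rfloor-1$, hence is positive for $x>\lfloor d/2\rfloor-1$. For each pair set $A(x)=d!\binom{x+d-i}{d}=\prod_{t=1-i}^{d-i}(x+t)$ and $B(x)=d!\binom{x+i}{d}=\prod_{t=i-d+1}^{i}(x+t)$. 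Cancelling the common factors gives
\[
\frac{|A(x)|}{|B(x)|}=\prod_{s=i}^{d-i-1}\frac{|x+s+1|}{|x-s|},
\]
and each factor exceeds $1$ as soon as $x>-\tfrac12$: if $x\ge s$ the inequality reads $2s+1>0$, and if $x<s$ it reads $2x+1>0$. Since moreover $A(x)>0$ for $x>i-1$, for every $x>\lfloor d/2\rfloor-1$ (which is $\ge i-1$ and $\ge-\tfrac12$) we obtain $A(x)>|B(x)|$, so $h_i(x)>0$; the finitely many points where $B$ vanishes are handled directly, as there $A>0$. Thus $f(x)>0$ for $x>\lfloor d/2\rfloor-1$, which is exactly the real-root claim.

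For (b), after the substitution $y=x+\tfrac12$ the polynomial $g(y)=f(y-\tfrac12)$ is even when $d$ is even and odd when $d$ is odd. For $d\le 3$ this makes $g$ a multiple of an even quadratic or of an odd cubic, so every non-real root has $\Re(y)=0$, i.e.\ $\Re(\alpha)=-\tfrac12$, and we are done. For $d\in\{4,5\}$ the nontrivial roots come from a biquadratic: $g$ reduces to $q(w)=aw^2+bw+c$ with $w=y^2$, where $a,b,c$ are explicit linear forms in $\delta_0,\delta_1,\delta_2$ and $a>0$. A root $w$ that is real and positive yields a real $\alpha$ already controlled by (a); a real negative root gives $\Re(y)=0$; so the only genuinely two-dimensional case is a conjugate pair of complex $w$, occurring when $b^2<4ac$. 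There $\Re(y)^2=\tfrac12\!\big(\sqrt{c/a}-b/(2a)\big)$, and since $\lfloor d/2\rfloor-\tfrac12=\tfrac32$ for both $d=4$ and $d=5$, the bound $|\Re(y)|\le\tfrac32$ is equivalent to $(9a+b)^2\ge 4ac$, after checking that $9a+b$ is a manifestly nonnegative combination of the $\delta_i$.

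The main obstacle is precisely this last inequality. The plan is to expand $(9a+b)^2-4ac$ as a quadratic form in $(\delta_0,\delta_1,\delta_2)$ and verify that it is nonnegative on the nonnegative orthant, exhibiting it as a nonnegative combination of the products $\delta_i\delta_j$ (or as a sum of squares plus such terms). This is a finite but delicate computation, and it is exactly where the hypotheses $d\le5$ and $\delta_i\ge0$ are used; the case $d=5$ runs along identical lines with its own $a,b,c$, and I expect it to be the most laborious, though conceptually the same as $d=4$.
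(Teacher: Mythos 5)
Your proposal is correct, and while part (a) and the set-up of part (b) follow the same skeleton as the paper, your finishing argument for $d=4,5$ is genuinely different and, as far as I can tell, cleaner. For (a), the paper pairs the terms into $N_i(n)$, substitutes $n-\tfrac12$, and factors $N_i(n-\tfrac12)=\prod_{l=0}^{2i-1}(n-\tfrac12+i-l)\cdot M_i(n)$ with $M_i$ having nonnegative coefficients; your termwise comparison $|A(x)|>|B(x)|$ via the product $\prod_{s=i}^{d-i-1}|x+s+1|/|x-s|$ is the same idea with different bookkeeping, and your explicit aside on $d=1$ (where the ``more strictly'' interval is empty and only the non-strict bound $-\tfrac12\le\alpha\le-\tfrac12$ holds) is a point the paper glosses over. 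For (b) with $d=4,5$, both arguments pass to the biquadratic $g$, set $w=y^2$, and use $\Re(y)^2=\tfrac12(r+r\cos\theta)$; but the paper then solves the discriminant condition $D(G)<0$ for the admissible region of $(\delta_1,\delta_2)$ in two cases and bounds $\sqrt{r}$ (resp.\ $\sqrt{(r+r\cos\theta)/2}$) by monotonicity estimates --- it \emph{must} use the discriminant constraint for $d=4$ because it discards the $-b/(2a)$ term, and $\sqrt{r}\le\tfrac32$ fails at $\delta_1=\delta_2=0$. You keep that term and reduce everything to the single inequality $(9a+b)^2\ge 4ac$, which I have checked does close: as a quadratic form in $(\delta_0,\delta_1,\delta_2)$ one gets $3616\delta_0^2+2960\delta_0\delta_1+736\delta_0\delta_2+640\delta_1^2+328\delta_1\delta_2+40\delta_2^2$ for $d=4$ and $16000\delta_0^2+12480\delta_0\delta_1+1760\delta_0\delta_2+2880\delta_1^2+1440\delta_1\delta_2+160\delta_2^2$ for $d=5$, with all coefficients nonnegative, and likewise $9a+b$ equals $61\delta_0+25\delta_1+\tfrac{13}{2}\delta_2$ resp.\ $133\delta_0+53\delta_1+13\delta_2$, so the squaring step is legitimate. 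Thus your inequality holds unconditionally on the nonnegative orthant, no case analysis on the discriminant is needed, and no normalization $\delta_0=1$ is required (the paper's separate treatment of $\delta_0=0$ disappears since $a=2\delta_0+2\delta_1+\delta_2>0$ whenever $f\not\equiv0$). The only thing missing from your write-up is the actual expansion, which works out as stated.
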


We prove Theorem \ref{main} in Section 1. 
Moreover, in Section 2, we discuss some differences of root distributions 
between SSNN polynomials and Ehrhart polynomials of Gorenstein Fano polytopes 
of small dimensions (degrees).


\section{A proof of Theorem \ref{main}}

This section is devoted to proving Theorem \ref{main}. 

\bigskip

Let $f(n)=\sum_{i=0}^d\delta_i\binom{n+d-i}{d}$ be an SSNN polynomial of degree $d$. 
First of all, we verify that $f(n)$ satisfies 
\begin{eqnarray}\label{funceq}
f(n)=(-1)^df(-n-1).\end{eqnarray} 
Let $$N_i(n)=\prod_{j=0}^{d-1}(n+d-i-j)+\prod_{j=0}^{d-1}(n+i-j)$$ 
for $0 \leq i \leq \lfloor \frac{d}{2} \rfloor -1$ and 
\begin{eqnarray*}
N_{\lfloor \frac{d}{2} \rfloor}(n)=
\begin{cases}
\prod_{j=0}^{d-1}(n+\frac{d}{2}-j), \quad\quad\quad\quad\quad\quad\quad\quad\quad\quad\quad
&\text{if } d \text{ is even}, \\
\prod_{j=0}^{d-1}(n+\frac{d+1}{2}-j)+\prod_{j=0}^{d-1}(n+\frac{d-1}{2}-j), 
&\text{if } d \text{ is odd}. 
\end{cases}
\end{eqnarray*}
It then follows that $f(n)=\sum_{i=0}^{\lfloor \frac{d}{2} \rfloor}\frac{\delta_iN_i(n)}{d!}.$ 
Since one has 
\begin{eqnarray*}
(-1)^dN_i(-n-1)&=&(-1)^d\prod_{j=0}^{d-1}(-n-1+d-i-j)+(-1)^d\prod_{j=0}^{d-1}(-n-1+i-j) \\
&=&\prod_{j=0}^{d-1}(n+1-d+i+j)+\prod_{j=0}^{d-1}(n+1-i+j) \\
&=&\prod_{j=0}^{d-1}(n+i-j)+\prod_{j=0}^{d-1}(n+d-i-j)=N_i(n) 
\end{eqnarray*}
for $0 \leq i \leq \lfloor \frac{d}{2} \rfloor -1$ and 
$(-1)^dN_{\lfloor \frac{d}{2} \rfloor}(-n-1)=N_{\lfloor \frac{d}{2} \rfloor}(n)$, 
we obtain the desired identity $f(n)=(-1)^df(-n-1)$. 

We give a proof of Theorem \ref{main} (a) by using the above notations. 
\begin{proof}[Proof of Theorem \ref{main} (a)]
Let $$g(n)=d!f\left(n-\frac{1}{2} \right)=
\sum_{i=0}^{\lfloor \frac{d}{2} \rfloor}\delta_iN_i\left(n-\frac{1}{2}\right).$$ 
Then, it suffices to prove that all real roots of $g(n)$ are contained in 
the closed interval 
$[-\lfloor \frac{d}{2} \rfloor +\frac{1}{2}, \lfloor \frac{d}{2} \rfloor -\frac{1}{2}]$. 
It follows from \eqref{funceq} that 
\begin{eqnarray}\label{funceq2}
g(n)=(-1)^dg(-n). 
\end{eqnarray}

For $N_i\left(n-\frac{1}{2}\right)$, when $0 \leq i \leq \lfloor \frac{d}{2}\rfloor-1 $, 
we have 
\begin{eqnarray*}
N_i\left(n-\frac{1}{2}\right)&=& 
\prod_{j=0}^{d-1}\left(n+d-\frac{1}{2}-i-j\right)
+\prod_{j=0}^{d-1}\left(n-\frac{1}{2}+i-j\right) \\
&=&\prod_{l=0}^{2i-1}\left(n-\frac{1}{2}+i-l \right)M_i(n), 
\end{eqnarray*}
where 
$$M_i(n)=\prod_{j=0}^{d-2i-1}\left( n+\frac{1}{2}+i+j \right)+
\prod_{j=0}^{d-2i-1}\left( n-\frac{1}{2}-i-j \right), $$
and 
\begin{eqnarray*}
N_{\lfloor \frac{d}{2} \rfloor}\left(n-\frac{1}{2}\right)=
\begin{cases}
\prod_{j=0}^{d-1}\left( n+\frac{d-1}{2}-j \right) \;\;\;\;\;\;\;\;&\text{if $d$ is even}, \\
2n \prod_{l=0}^{d-2}\left( n+\frac{d-2}{2}-l \right) &\text{if $d$ is odd}. 
\end{cases}
\end{eqnarray*}
Let $\alpha$ be a real number with 
$\alpha > \lfloor \frac{d}{2} \rfloor -\frac{1}{2}$. 
Since the coefficients of $n^j$ in $M_i(n)$, where $0 \leq j \leq d-2i-1$, are all nonnegative, 
we have $M_i(\alpha) > 0$. In addition, 
one has $\prod_{l=0}^{2i-1}\left(\alpha- \left( \frac{1}{2}-i+l \right) \right) > 0$ 
because $0 \leq l \leq 2i-1$ and $0 \leq i \leq \lfloor \frac{d}{2} \rfloor$. 
Furthermore, $N_{\lfloor \frac{d}{2} \rfloor}(\alpha-\frac{1}{2})=1$ if $d$ is even and 
$N_{\lfloor \frac{d}{2} \rfloor}(\alpha-\frac{1}{2})=2\alpha$ if $d$ is odd. 
Hence, $\alpha$ cannot be a root of $g(n)$ from the nonnegativity of 
$\delta_0,\delta_1,\ldots,\delta_{\lfloor \frac{d}{2} \rfloor}$. 
Moreover, by virtue of (\ref{funceq2}), for a real number 
$\beta$ with $\beta < -\left\lfloor \frac{d}{2} \right\rfloor + \frac{1}{2}$, 
$\beta$ cannot be a root of $g(n)$, as desired. 
\end{proof}

In the rest of this section, we prove Theorem \ref{main} (b).

\subsection{The case where $d=2$ and $3$}

\begin{itemize}
\item An SSNN polynomial of degree 2 has two roots. 
If both of them are real numbers, then the assertion holds from Theorem \ref{main} (a). 
If both of them are non-real numbers, then it follows from (\ref{funceq}) that 
each of their real parts must be $-\frac{1}{2}$.

\item An SSNN polynomial of degree 3 has three roots and one is $-\frac{1}{2}$. 
On the other two roots, the same discussion as the case where $d=2$ can be done. 
\end{itemize}

\subsection{The case where $d=4$}

Let $f(n)=\frac{a}{4!}N_0(n)+\frac{b}{4!}N_1(n)+\frac{c}{4!}N_2(n)$, 
where $a,b,c \in \RR_{\geq 0}$. 
Then $f(n)$ has four roots and the possible cases are as follows: 
\begin{itemize}
\item[(i)] those four roots are all real numbers; 
\item[(ii)] two of them are real numbers and the others are non-real numbers; 
\item[(iii)] those four roots are all non-real numbers. 
\end{itemize}
We do not have to discuss the cases (i) and (ii) by virtue of Theorem \ref{main} (a). 
Thus, we consider the case (iii), i.e., we assume that $f(n)$ has four non-real roots. 
Moreover, we may also assume that $a \not=0$ 
since both $0$ and $-1$ are their roots when $a=0$. 
In addition, we may set $a=1$ since the roots of $f(n)$ 
exactly coincide with those of $\frac{f(n)}{a}$. 

We define 
\begin{eqnarray*}
g(n)=4!f\left( n-\frac{1}{2} \right) 
= (2+2b+c)n^4 + \left(43+7b-\frac{5}{2}c \right)n^2 + \frac{105}{8}-\frac{15}{8}b+\frac{9}{16}c. 
\end{eqnarray*}
Our work is to show that if the roots $\alpha$ of $g(n)$ are all non-real numbers, 
then $\alpha$ satisfies $-\frac{3}{2} \leq \Re(\alpha) \leq \frac{3}{2}$. Let 
$$G(X)=(2+2b+c)X^2 + \left(43+7b-\frac{5}{2}c \right)X + \frac{105}{8}-\frac{15}{8}b+\frac{9}{16}c.$$ 
We consider the roots of $G(X)$. 
Let $\alpha$ and $\beta$ be the roots of $G(X)$ and $D(G(X))$ the discriminant. 
By our assumption, we may set $D(G(X)) < 0$. 
In fact, when $D(G(X)) \geq 0$, i.e., both $\alpha$ and $\beta$ are real numbers, 
then the roots of $g(n)$ are $\pm \sqrt{\alpha}, \pm\sqrt{\beta}$. 
Even if $\alpha$ (resp. $\beta$) is positive or negative, 
$\pm \sqrt{\alpha}$ (resp. $\pm \sqrt{\beta}$) are 
either real numbers or pure imaginary numbers. 

Let, say, $\alpha=re^{\theta \sqrt{-1}}$ with $r > 0$ and $0 < \theta < \pi$. 
Then $\beta=\bar{\alpha}=re^{-\theta \sqrt{-1}}$. 
Thus the roots of $g(n)$ are $\sqrt{r}e^{\pm \frac{\theta}{2} \sqrt{-1}}$ and 
$\sqrt{r}e^{\pm (\pi-\frac{\theta}{2}) \sqrt{-1}}.$ 
Hence, it is enough to show that 
$$ 0 < \Re(\sqrt{r}e^{\frac{\theta}{2} \sqrt{-1}}) = 
\sqrt{r}\cos \frac{\theta}{2} = \sqrt{r}\sqrt{\frac{1+\cos \theta}{2}} = 
\sqrt{\frac{r+r \cos \theta}{2}} \leq \frac{3}{2}.$$ 
Since $G(X)=(2+2b+c)(X-\alpha)(X-\beta)$, we have $$r=\frac{1}{4}\sqrt{\frac{210-30b+9c}{2+2b+c}}.$$ 


By the way, one has 
\begin{eqnarray*}
D(G(X)) &=& \left(43+7b-\frac{5}{2}c\right)^2- 
4(2+2b+c)\left( \frac{105}{8}-\frac{15}{8}b+\frac{9}{16}c \right) \\
&=& 4(16b^2-8(c-16)b+c^2-68c+436). \end{eqnarray*} 
Let $h(b)=\frac{D(G(X))}{4}$. Then $h(b) < 0$ by our assumption. 
The range of $b$ with $h(b)<0$ is 
$$\frac{4(c-16)-\sqrt{D(h(b))}}{16} < b < \frac{4(c-16)+\sqrt{D(h(b))}}{16},$$ 
where $4 \cdot D(h(b))=4 \cdot 24^2(c-5)$. (In particular, it must be $c \geq 5$.) 
Moreover, since $b \geq 0$, it must be $\frac{c-16+6 \sqrt{c-5}}{4} > 0$. 
Thus, $c > 34 - 12 \sqrt{5} (>5)$. 
Hence, the condition $D(G(X))<0$ is equivalent to the following conditions: 
\begin{eqnarray}\label{condi1}
\nonumber &&c > 34 - 12 \sqrt{5}, \\
&&\begin{cases}
0 \leq b < \frac{c-16+6\sqrt{c-5}}{4}, \;\; 
&\text{when } \;\; 34 - 12 \sqrt{5} < c \leq 34 + 12 \sqrt{5}, \\
\frac{c-16-6\sqrt{c-5}}{4} < b < \frac{c-16+6\sqrt{c-5}}{4}, 
&\text{when } \;\; c > 34 + 12 \sqrt{5}. 
\end{cases}
\end{eqnarray}

When $b$ and $c$ satisfy the first condition of (\ref{condi1}), we have 
\begin{align*}
\frac{210-30b+9c}{2+2b+c}&= \frac{-15(2+2b+c)+24c+240}{2+2b+c} =-15+24 \cdot \frac{c+10}{2+2b+c} \\
&\leq -15+24 \cdot \frac{c+10}{c+2} < -15+24 \cdot \frac{34 - 12 \sqrt{5}+10}{34 - 12 \sqrt{5}+2} \\
&= 21+4 \sqrt{5} =(2 \sqrt{5}+1)^2. 
\end{align*} 
Similarly, when $b$ and $c$ satisfy the second condition of \eqref{condi1}, we have 
\begin{align*}
\frac{210-30b+9c}{2+2b+c}&= -15+24 \cdot \frac{c+10}{2+2b+c} < -15+24 \cdot \frac{c+10}{\frac{c-16-6\sqrt{c-5}}{2}+c+2} \\
&= -15+16 \cdot\frac{c+10}{c-4-2\sqrt{c-5}}(=:H(c)) \\
&< -15+16 \cdot \frac{34 + 12 \sqrt{5}+10}{34 + 12 \sqrt{5}-4-2\sqrt{34 + 12 \sqrt{5}-5}}, \\
&\left( \text{since } \frac{dH(c)}{dc} < 0 \text{ when } c > 34 + 12 \sqrt{5},\right) \\
&= -15+16 \cdot \frac{22 + 6 \sqrt{5}}{15 + 6 \sqrt{5}-\sqrt{29 + 12 \sqrt{5}}} < (2 \sqrt{5}+1)^2. \\
\end{align*} Therefore, one has 
$$\sqrt{\frac{r+r \cos \theta}{2}} \leq \sqrt{r} 
=\frac{1}{2}\left( \frac{210-30b+9c}{2+2b+c} \right)^{\frac{1}{4}} 
< \frac{\sqrt{2 \sqrt{5}+1}}{2}<\frac{3}{2},$$ as required.


\subsection{The case where $d=5$}

This case is similar to the previous case. 
Let $f(n)=\frac{a}{5!}N_0(n)+\frac{b}{5!}N_1(n)+\frac{c}{5!}N_2(n)$, 
where $a,b,c \in \RR_{\geq 0}$. 
Then $f(n)$ has five roots and one of them is $-\frac{1}{2}$. 
For the other roots, the possible cases are the same as 
the case where $d=4$. 
We discuss only the case (iii) and assume that $a=1$. We define 
\begin{eqnarray*}
g(n)=\frac{5!f\left( n-\frac{1}{2} \right)}{n} 
= 2(1+b+c)n^4 + 5(23+7b-c) n^2 + \frac{1689}{8}-\frac{71}{8}b+\frac{9}{8}c. 
\end{eqnarray*}
Our work is to show that if the roots $\alpha$ of $g(n)$ are all non-real numbers, 
then $\alpha$ satisfies $-\frac{3}{2} \leq \Re(\alpha) \leq \frac{3}{2}$. 
Let $$G(X)=2(1+b+c)X^2 + 5(23+7b-c) X + \frac{1689}{8}-\frac{71}{8}b+\frac{9}{8}c.$$ 
We consider the roots of $G(X)$ and prove 
$\sqrt{\frac{r+r \cos \theta}{2}} \leq \frac{3}{2}$ under the assumption $D(G(X))<0$, 
where $\alpha=re^{\theta \sqrt{-1}} (r>0, 0< \theta < \pi)$ is one of the roots of $G(X)$. 
Note that 
$$r= \frac{1}{4}\sqrt{\frac{1689-71b+9c}{1+b+c}} \;\; \text{ and } \;\; 
r \cos \theta = -\frac{1}{4} \cdot \frac{115+35b-5c}{1+b+c}.$$


By the way, one has 
\begin{eqnarray*}
D(G(X)) &=& 25 (23+7b-c)^2 - 
8(1+b+c)\left( \frac{1689}{8}-\frac{71}{8}b+\frac{9}{8}c \right) \\
&=& 16(81b^2-6(3c-67)b+c^2-178c+721). \end{eqnarray*} 
Let $h(b)=\frac{D(G(X))}{16}$. Then $h(b) < 0$ by our assumption. 
The range of $b$ with $h(b)<0$ is 
$$\frac{3(3c-67)-\sqrt{D(h(b))}}{81} < b < \frac{3(3c-67)+\sqrt{D(h(b))}}{81},$$ 
where $4 \cdot D(h(b))=4 \cdot 60^2 (3c-5)$. 
(In particular, it must be $c \geq \frac{5}{3}$.) 
Moreover, since $b \geq 0$, it must be $\frac{3c-67+20\sqrt{3c-5}}{27} > 0$. 
Thus, $c > 89-60\sqrt{2} (> \frac{5}{3})$. 
Hence, the condition $D(G(X))<0$ is equivalent to the following conditions: 
\begin{eqnarray}\label{condi2}
\nonumber &&c > 89-60\sqrt{2}, \\
&&\begin{cases}
0 \leq b < \frac{3c-67+20\sqrt{3c-5}}{27}, \;\; 
&\text{when } \; 89-60\sqrt{2} < c \leq 89+60\sqrt{2}, \\
\frac{3c-67-20\sqrt{3c-5}}{27} < b < \frac{3c-67+20\sqrt{3c-5}}{27}, 
&\text{when } \; c > 89+60\sqrt{2}. 
\end{cases}
\end{eqnarray}

When $b$ and $c$ satisfy the first condition of \eqref{condi2}, we have 
\begin{align*}
&\sqrt{\frac{1689-71b+9c}{1+b+c}}+\frac{-115-35b+5c}{1+b+c} \\
&\quad\quad= \sqrt{\frac{-71(1+b+c)+80c+1760}{1+b+c}}+\frac{-35(1+b+c)+40c-80}{1+b+c} \\
&\quad\quad= \sqrt{-71+80 \cdot \frac{c+22}{1+b+c}}+
40 \cdot \frac{c-2}{1+b+c}-35 \\
&\quad\quad\leq \sqrt{-71+80 \cdot \frac{c+22}{c+1}}+
40 \cdot \frac{c-2}{c+1}-35 (=:H_1(c)) \\
&\quad\quad\leq \sqrt{-71+80 \cdot \frac{41+22}{41+1}}+40 \cdot \frac{41-2}{41+1}-35, \\
&\left(\text{since } \frac{dH_1(c)}{dc}>0 \text{ when } c<41 \text{ and } 
\frac{dH_1(c)}{dc}<0 \text{ when } c>41,\right) \\
&\quad\quad= \sqrt{-71+120}+\frac{260}{7}-35 = \frac{64}{7}. 
\end{align*} Thus, one has 
$$\sqrt{\frac{r+r \cos \theta}{2}} \leq  \sqrt{\frac{1}{2} \cdot \frac{1}{4} \cdot \frac{64}{7}}=\frac{2 \sqrt{14}}{7} < \frac{3}{2}.$$ 
On the other hand, when $b$ and $c$ satisfy the second condition of \eqref{condi2}, we have 
\begin{align*}
\frac{1689-71b+9c}{1+b+c}&= -71+80 \cdot \frac{c+22}{1+b+c} \\
&< -71+80 \cdot \frac{c+22}{\frac{3c-67-20 \sqrt{3c-5}}{27}+c+1} \\
&= -71+8 \cdot \frac{27(c+22)}{3c-4-2\sqrt{3c-5}}(=:H_2(c)) \\
&< -71+8 \cdot \frac{27(89+60\sqrt{2}+22)}{3(89+60\sqrt{2})-4-2\sqrt{3(89+60\sqrt{2})-5}} \\
&\left( \text{since } \frac{dH_2(c)}{dc} < 0 \text{ when } c > 89+60\sqrt{2}, \right) \\
&=177-112\sqrt{2}. 
\end{align*} Therefore, one has 
$$\sqrt{\frac{r+r \cos \theta}{2}} \leq \sqrt{r} 
=\frac{1}{2}\left( \frac{1689-71b+9c}{1+b+c} \right)^{\frac{1}{4}} < \frac{1}{2} \left( 177-112\sqrt{2} \right)^{\frac{1}{4}} 
< \frac{2\sqrt{14}}{7} < \frac{3}{2},$$ as required.

\begin{Remark}\label{counter}{\em 
There is an SSNN polynomial of degree 8 one of whose roots $\alpha$ 
does not satisfy $-4 \leq \Re(\alpha) \leq 3$. 
In fact, if we set 
$(\delta_0,\delta_1,\ldots,\delta_8)=(1,0,0,0,14,0,0,0,1)$ and 
$f(n)=\sum_{i=0}^8 \delta_i \binom{n+8-i}{8}$, 
then the roots of $f(n)$ are approximately 
\begin{eqnarray*}
&&-0.5 \pm 0.44480014 \sqrt{-1}, \;\; -0.5 \pm 1.78738687 \sqrt{-1}, \\
&&3.00099518 \pm 5.29723208 \sqrt{-1} \; \text{ and } \; 
-4.00099518 \pm 5.29723208 \sqrt{-1}. 
\end{eqnarray*}
On the other hand, $f(n)$ cannot be the Ehrhart polynomial of 
any Gorenstein Fano polytope of dimension 8 since $\delta_1 < \delta_8$. 
When $d=10$, however, there are some candidates 
of counterexamples of Conjecture \ref{new}. 
For example, let 
$(\delta_0,\delta_1,\ldots,\delta_{10})=(1,1,1,1,1,23,1,1,1,1,1)$ and 
$f(n)=\sum_{i=0}^{10} \delta_i \binom{n+10-i}{10}$. 
Then one of approximate roots of $f(n)$ is 
$$4.02470021 + 8.22732653 \sqrt{-1}.$$ 
On the other hand, in a recent paper \cite{OS}, 
a certain counterexample of Conjecture \ref{new} is provided. 
There exists a Gorenstein Fano polytope of dimension 34 
whose Ehrhart polynomial has a root $\alpha$ 
which violates $-17 \leq \Re(\alpha) \leq 16$. 
}\end{Remark}


\section{Some comparisons of SSNN polynomials 
with Ehrhart polynomials of Gorenstein Fano polytopes}

In this section, we discuss some differences of root distributions 
between Ehrhart polynomials of Gorenstein Fano polytopes and SSNN polynomials 
when $d \leq 4$. We determine the complete range of the roots of 
Ehrhart polynomials of Gorenstein Fano polytopes and SSNN polynomials 
when $d=2$ and 3 (Proposition \ref{dim2} and \ref{dim3}). 
Moreover, we see in Theorem \ref{real} that 
each of the real numbers in the closed interval $[-\frac{d}{2}, \frac{d}{2}-1]$ 
can be a real root of some SSNN polynomial of degree $d$. 

\bigskip

\begin{Proposition}\label{dim2}
{\em (a)} The set of the roots of the Ehrhart polynomials of 
Gorenstein Fano polytopes of dimension 2 coincides with 
$$\left\{-\frac{2}{3}, -\frac{1}{2}, -\frac{1}{3}\right\} \cup 
\left\{ -\frac{1}{2} \pm \frac{1}{2}\sqrt{\frac{6-i}{i+2}}\sqrt{-1} \in \CC : 
i=1,2,\ldots,5 \right\}.$$ 
{\em (b)} The set of the roots of SSNN polynomials of degree 2 
coincides with 
$$[-1,0] \cup 
\left\{ \alpha \in \CC : \Re(\alpha)=-\frac{1}{2}, \; 
0 < |\Im(\alpha)| \leq \frac{\sqrt{3}}{2}\right\}.$$ 
\end{Proposition}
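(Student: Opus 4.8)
The two parts rest on the same elementary computation; only the admissible $\delta$-vectors differ. The plan is to write down the general degree-$2$ SSNN polynomial, solve the resulting quadratic once and for all, and then read off the locus of its roots as the $\delta$-vector ranges over the relevant parameter set. First I would use the symmetry $\delta_0=\delta_2$ to write
$$f(n)=\delta_0\binom{n+2}{2}+\delta_1\binom{n+1}{2}+\delta_0\binom{n}{2}=\Big(\delta_0+\tfrac{\delta_1}{2}\Big)(n^2+n)+\delta_0,$$
using $\binom{n+2}{2}+\binom{n}{2}=n^2+n+1$ and $\binom{n+1}{2}=\tfrac12(n^2+n)$. Setting $A=\delta_0+\tfrac{\delta_1}{2}$ (the leading coefficient, positive precisely when $\deg f=2$), the two roots are
$$-\tfrac12\pm\tfrac12\sqrt{1-\tfrac{4\delta_0}{A}}.$$

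For part (b) I would introduce the single parameter $t=\delta_0/A=\tfrac{2\delta_0}{2\delta_0+\delta_1}$, which ranges over all of $[0,1]$ as $(\delta_0,\delta_1)$ runs over $\RR_{\geq0}^2\setminus\{0\}$ (with $t=1$ at $\delta_1=0$ and $t=0$ at $\delta_0=0$). When $t\le\tfrac14$ the radicand $1-4t$ lies in $[0,1]$, so the real roots $-\tfrac12\pm\tfrac12\sqrt{1-4t}$ sweep out exactly $[-1,0]$; when $t>\tfrac14$ the roots are $-\tfrac12\pm\tfrac12\sqrt{4t-1}\,\sqrt{-1}$ with $\sqrt{4t-1}\in(0,\sqrt3\,]$, so their imaginary parts fill $(0,\tfrac{\sqrt3}{2}]$ in absolute value. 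Since conversely every point of the target set is attained for a suitable $t$, this gives both inclusions and proves (b).

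For part (a) I would specialize to the Gorenstein Fano case. As $\delta_0=1$ always holds for an Ehrhart polynomial and the Gorenstein condition forces $\delta_2=\delta_0=1$, the only free invariant is $\delta_1=\sharp(\Pc\cap\ZZ^2)-3$. A Fano polygon has exactly one interior lattice point (the origin), so writing $b$ for the number of boundary lattice points we get $\sharp(\Pc\cap\ZZ^2)=b+1$ by Pick's theorem, hence $\delta_1=b-2$. With $\delta_0=1$ and $\delta_1=i$ the root formula becomes $-\tfrac12\pm\tfrac12\sqrt{\tfrac{i-6}{i+2}}$: for $i\le5$ these are the non-real numbers $-\tfrac12\pm\tfrac12\sqrt{\tfrac{6-i}{i+2}}\sqrt{-1}$, for $i=6$ a double root at $-\tfrac12$, and for $i=7$ the real pair $\{-\tfrac13,-\tfrac23\}$, matching the two displayed sets.

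The only non-elementary input, and hence the main obstacle, is determining exactly which values of $b$ occur. I would invoke the classification of reflexive polygons (there are $16$ up to unimodular equivalence), or equivalently the ``number $12$'' identity $b(\Pc)+b(\Pc^\vee)=12$ together with $b\ge3$, to obtain $3\le b\le9$; exhibiting explicit reflexive polygons (such as $\PP^2$, $\PP^1\times\PP^1$, and their relatives) realizing each value shows that $b$ attains every integer in $\{3,\dots,9\}$, i.e. $\delta_1\in\{1,\dots,7\}$. Feeding $i=1,\dots,7$ into the specialized root formula then produces precisely the set asserted in (a), completing the proof.
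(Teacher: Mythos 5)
Your proposal is correct and follows essentially the same route as the paper: both reduce to the explicit quadratic $(\delta_0+\tfrac{\delta_1}{2})(n^2+n)+\delta_0$, read off the roots $-\tfrac12\pm\tfrac12\sqrt{\,\cdot\,}$, and then determine the admissible parameter range (your single parameter $t$ versus the paper's normalization $\delta_0=1$ plus the separate case $\binom{n+1}{2}$ is only a cosmetic difference). The one place you add substance is in part (a), where the paper simply cites the fact that $\delta_1\in\{1,\dots,7\}$ as well known, while you derive it from the reflexive-polygon classification via $b(\Pc)+b(\Pc^\vee)=12$ and explicit examples.
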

\begin{proof}
Let $f(n)=\binom{n+2}{2} + b \binom{n+1}{2} + \binom{n}{2}$. 
Then $2f(n)=(b+2)n^2+(b+2)n+2$. Thus its roots are 
$$n=\frac{-(b+2) \pm \sqrt{(b+2)(b-6)}}{2(b+2)}
=-\frac{1}{2} \pm \frac{1}{2}\sqrt{\frac{b-6}{b+2}}.$$ 

It is well known that $(1,b,1) \in \ZZ^3$ is 
the $\delta$-vector of some Gorenstein Fano polytope of dimension 2 
if and only if $b \in \{1,2,\ldots,7\}$. 
Hence we obtain the assertion (a). 

On the other hand, when $b \in \RR_{\geq 0}$, 
the set of the roots of $f(n)$ coincides with 
$$(-1,0) \cup 
\left\{ \alpha \in \CC : \Re(\alpha)=-\frac{1}{2}, \; 
0 < |\Im(\alpha)| \leq \frac{\sqrt{3}}{2}\right\}.$$ 
In fact, the function $\frac{1}{2}\sqrt{\frac{b-6}{b+2}}$ is monotone increasing 
and $\lim_{b \to +\infty} \frac{1}{2}\sqrt{\frac{b-6}{b+2}}=\frac{1}{2}$ 
when $b \geq 6$, and $\frac{1}{2}\sqrt{\frac{6-b}{b+2}}$ is monotone decreasing 
when $0 \leq b <6$. 
Moreover, $-1$ and 0 are the roots of $\binom{n+1}{2}$. 
Therefore, the assertion (b) holds, as desired. 
\end{proof}

\begin{Proposition}\label{dim3}
{\em (a)} The set of the roots of the Ehrhart polynomials of 
Gorenstein Fano polytopes of dimension 3 coincides with 
\begin{eqnarray*}
&&\left\{-\frac{1}{2} \pm \frac{1}{2}\sqrt{\frac{i-23}{i+1}} \in \RR : 
i=23,24,\ldots,32,35 \right\} \cup \\
&&\quad\quad\quad\quad\quad\quad\quad 
\left\{ -\frac{1}{2} \pm \frac{1}{2}\sqrt{\frac{23-i}{i+1}}\sqrt{-1} \in \CC : 
i=1,2,\ldots,22 \right\}. 
\end{eqnarray*}
{\em (b)} The set of the roots of SSNN polynomials of degree 3 
coincides with 
$$[-1,0] \cup 
\left\{ \alpha \in \CC : \Re(\alpha)=-\frac{1}{2}, \; 
0 < |\Im(\alpha)| \leq \frac{\sqrt{23}}{2} \right\}.$$
\end{Proposition}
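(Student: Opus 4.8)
The plan is to mirror the proof of Proposition~\ref{dim2}, replacing the one-parameter symmetric vector $(1,b,1)$ by its three-dimensional analogue. Since $d=3$ is odd and the $\delta$-vector of a Gorenstein Fano polytope is symmetric, it has the shape $(1,b,b,1)$, so I would begin with
$$f(n)=\binom{n+3}{3}+b\binom{n+2}{3}+b\binom{n+1}{3}+\binom{n}{3}$$
and compute its roots explicitly. By \eqref{funceq} the number $-\tfrac12$ is always a root, so I would pass to $g(n)=6f(n-\tfrac12)$, which by \eqref{funceq2} is odd and hence factors as $g(n)=n\bigl((2+2b)n^2+\tfrac{23-b}{2}\bigr)$. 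Solving the quadratic factor gives the remaining two roots $-\tfrac12\pm\tfrac12\sqrt{\tfrac{b-23}{b+1}}$. This single formula governs both parts: the roots are real exactly when $b\geq 23$ and non-real with real part $-\tfrac12$ exactly when $b<23$.

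For part (a) I would then invoke the classification of $\delta$-vectors of three-dimensional Gorenstein Fano polytopes, the analogue of the fact $b\in\{1,\dots,7\}$ used for $d=2$: here $(1,b,b,1)$ is realizable if and only if $b\in\{1,2,\dots,32,35\}$, the value $35$ coming from the reflexive polytope attaining the maximal number of lattice points and $33,34$ being genuinely absent. Substituting $i=b$ and sorting by the sign of $b-23$ reproduces exactly the listed set: the values $i=1,\dots,22$ give the non-real roots $-\tfrac12\pm\tfrac12\sqrt{\tfrac{23-i}{i+1}}\sqrt{-1}$, while $i=23,\dots,32,35$ give the real roots $-\tfrac12\pm\tfrac12\sqrt{\tfrac{i-23}{i+1}}$; I would note that $i=23$ supplies the ever-present root $-\tfrac12$, so that this one element need appear only once in the union.

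For part (b) I would allow the full SSNN family $f(n)=a\binom{n+3}{3}+b\bigl(\binom{n+2}{3}+\binom{n+1}{3}\bigr)+a\binom{n}{3}$ with $a,b\in\RR_{\geq 0}$ and $a+b>0$, and split on whether $a=0$. If $a\neq 0$ I normalize $a=1$ and let $b$ run over $[0,\infty)$; since $\tfrac{b-23}{b+1}=1-\tfrac{24}{b+1}$ is increasing, the real roots (for $b\geq 23$) sweep out the open interval $(-1,0)$ and the non-real roots (for $0\leq b<23$) sweep out $\{\alpha:\Re(\alpha)=-\tfrac12,\ 0<|\Im(\alpha)|\leq\tfrac{\sqrt{23}}{2}\}$, the bound $\tfrac{\sqrt{23}}{2}$ being attained at $b=0$. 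If $a=0$ then $f(n)=\tfrac{b}{6}(n+1)n(2n+1)$, whose roots $0,-\tfrac12,-1$ contribute precisely the two missing endpoints, upgrading $(-1,0)$ to the closed interval $[-1,0]$. Taking the union yields the asserted set.

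The routine part is the algebra, which is a short shift-and-factor computation. The genuine obstacle is in part (a): establishing the exact realizable set $\{1,\dots,32,35\}$, and in particular the gap at $33,34$ and the isolated maximum $35$, is a finite but nontrivial classification statement that the argument must import rather than derive; once it is granted, everything reduces to direct substitution into the root formula.
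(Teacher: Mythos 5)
Your proposal is correct and follows essentially the same route as the paper: compute the roots of $f(n)=\binom{n+3}{3}+b\binom{n+2}{3}+b\binom{n+1}{3}+\binom{n}{3}$ explicitly as $-\frac12$ and $-\frac12\pm\frac12\sqrt{\frac{b-23}{b+1}}$ (the paper factors $3!f(n)=(2n+1)((b+1)n^2+(b+1)n+6)$ rather than shifting by $\frac12$ first, but this is the same computation), import the Kreuzer--Skarke classification $b\in\{1,\dots,35\}\setminus\{33,34\}$ for part (a), and use monotonicity of $\frac{b-23}{b+1}$ together with the degenerate case $a=0$ (roots $0,-\frac12,-1$) for part (b). Your identification of the classification of realizable $(1,b,b,1)$ as the one genuinely imported ingredient matches the paper, which likewise cites it rather than deriving it.
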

\begin{proof}
Let $f(n)=\binom{n+3}{3} + b \binom{n+2}{3} + b \binom{n+1}{3} + \binom{n}{3}$. 
Then $3!f(n)=(2n+1)((b+1)n^2+(b+1)n+6)$. Thus its roots are $n=-\frac{1}{2}$ and 
$$n=\frac{-(b+1) \pm \sqrt{(b+1)(b-23)}}{2(b+1)} 
=-\frac{1}{2} \pm \frac{1}{2}\sqrt{\frac{b-23}{b+1}}.$$ 

By the complete classification of Kreuzer and Skarke \cite{KS98}, 
we know that $(1,b,b,1) \in \ZZ^4$ is the $\delta$-vector of 
some Gorenstein Fano polytope of dimension 3 
if and only if $b \in \{1,2,\ldots,35\} \setminus \{33,34\}$. 
(See also {\tt http://tph16.tuwien.ac.at/kreuzer/CY/}.) 

The rest parts are similar to Proposition \ref{dim2}. 
\end{proof}

From the proof of Theorem \ref{main} (b) of the case where $d=4$ and 5 
together with the norm bound \cite{Bra}, we also obtain the following 
\begin{Proposition}\label{dim45}
{\em (a)} The roots of SSNN polynomials of degree 4 are contained in 
$$[-2,1] \cup \left\{ \alpha \in \CC \setminus \RR : 
\left|\alpha + \frac{1}{2}\right| \leq 14, \; 
\left|\Re(\alpha) + \frac{1}{2}\right| \leq \frac{\sqrt{2 \sqrt{5}+1}}{2} \right\}.$$ 
{\em (b)} The roots of SSNN polynomials of degree 5 are contained in 
$$[-2,1] \cup \left\{ \alpha \in \CC \setminus \RR : 
\left|\alpha + \frac{1}{2}\right| \leq \frac{45}{2}, \; 
\left|\Re(\alpha) + \frac{1}{2}\right| \leq \frac{2 \sqrt{14}}{7} \right\}.$$ 
\end{Proposition}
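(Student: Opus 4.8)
The plan is to build each region as the union of the real interval $[-2,1]$ with the intersection of a vertical strip $\left|\Re(\alpha)+\frac{1}{2}\right|\leq C_d$ and a disk $\left|\alpha+\frac{1}{2}\right|\leq R_d$ centered at $-\frac{1}{2}$, treating the real roots and the non-real roots by two different mechanisms. For the real roots there is nothing to prove beyond Theorem \ref{main}(a): since $\lfloor d/2\rfloor=2$ for both $d=4$ and $d=5$, every real root lies in $[-2,1]$, which for $d=5$ in particular absorbs the forced root $-\frac{1}{2}$. This yields the first member of each union.

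Next I would extract the strip bound on the non-real roots directly from the computations already carried out in the proof of Theorem \ref{main}(b). After passing to $g(n)=d!\,f\!\left(n-\frac{1}{2}\right)$ (divided by $n$ when $d=5$) and writing $G(X)$ for the associated quadratic in $X=n^2$, a non-real root of $f$ corresponds either to a complex-conjugate pair of roots of $G$ (the case $D(G)<0$) or to a negative real root of $G$. In the first case the estimate established in the proof of Theorem \ref{main}(b) gives $\left|\Re(\alpha)+\frac{1}{2}\right|=\sqrt{\frac{r+r\cos\theta}{2}}<\frac{\sqrt{2\sqrt5+1}}{2}$ for $d=4$ and $<\frac{2\sqrt{14}}{7}$ for $d=5$; in the second case the corresponding roots of $g$ are purely imaginary, so the roots of $f$ have real part exactly $-\frac{1}{2}$ and the strip bound holds trivially. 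The subtle point here is that in the proof of Theorem \ref{main}(b) the cases (i) and (ii) were dismissed by appealing to Theorem \ref{main}(a), which governs only the \emph{real} roots; so one must make explicit the separate observation that the remaining (purely imaginary, after the shift) roots have real part $-\frac{1}{2}$, in order to see that the refined strip constant is valid for \emph{every} non-real root and not merely in the regime $D(G)<0$ where it was derived.

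Finally I would supply the modulus bound, which the strip estimate alone cannot provide, since it says nothing about how far a root with $\Re(\alpha)=-\frac{1}{2}$ can travel along the line $\Re(z)=-\frac{1}{2}$. Here I would invoke Braun's norm bound \cite{Bra}, noting that its argument rests only on Stanley's nonnegativity of the $\delta$-vector and therefore applies verbatim to arbitrary SSNN polynomials, not just to genuine Ehrhart polynomials. For $d=4$ it returns $\left|\alpha+\frac{1}{2}\right|\leq 14$ and for $d=5$ it returns $\left|\alpha+\frac{1}{2}\right|\leq\frac{45}{2}$. Intersecting this disk with the strip bound of the previous paragraph, and taking the union with $[-2,1]$ coming from the real roots, reproduces exactly the two regions in the statement.

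The main obstacle is thus not any fresh computation—the hard numerical work already lives inside the proof of Theorem \ref{main}(b)—but the bookkeeping across the degenerate sub-cases: one must verify that the refined strip constants $\frac{\sqrt{2\sqrt5+1}}{2}$ and $\frac{2\sqrt{14}}{7}$, obtained only under $D(G)<0$, survive for the purely imaginary shifted roots (where they hold trivially), and that the unbounded direction of those roots is the one caught by Braun's disk rather than by the strip. Confirming that Braun's proof genuinely uses only nonnegativity, so that it transfers to the SSNN class, is the other point deserving care.
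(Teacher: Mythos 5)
Your proposal matches the paper's own (very terse) justification exactly: the paper derives the strip constants from the $D(G)<0$ computations in the proof of Theorem \ref{main}(b), uses Theorem \ref{main}(a) for the real roots, and imports the disk $\left|\alpha+\frac{1}{2}\right|\leq d\left(d-\frac{1}{2}\right)$ from Braun's norm bound, which indeed relies only on nonnegativity of the coefficients in the binomial expansion and hence applies to SSNN polynomials. Your extra care with the case $D(G)\geq 0$ with a negative root of $G$ (yielding shifted roots on the line $\Re(z)=-\frac{1}{2}$) fills in a detail the paper leaves implicit, but the argument is the same.
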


Finally, we prove 
\begin{Theorem}\label{real}
Each real number contained in the closed interval $[-\frac{d}{2}, \frac{d}{2}-1]$ 
can be realized as a root of some SSNN polynomial of degree $d$. 
\end{Theorem}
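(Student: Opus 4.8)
The plan is to exhibit, for each target real number $\alpha_0$ in the interval, an explicit SSNN polynomial of degree $d$ vanishing at $\alpha_0$, assembled from at most two of the basis polynomials $N_i$. It is convenient to work with the shifted polynomial $g(n)=d!f(n-\tfrac12)=\sum_{i=0}^{\lfloor d/2\rfloor}\delta_iN_i(n-\tfrac12)$, which satisfies $g(n)=(-1)^dg(-n)$ by \eqref{funceq2}. A real number $\alpha_0$ is a root of $f$ exactly when $\beta_0:=\alpha_0+\tfrac12$ is a root of $g$, and the $(-1)^d$-symmetry reduces the task to realizing every $\beta_0\in[0,\lfloor d/2\rfloor-\tfrac12]$ (the center $\beta_0=0$, i.e. $\alpha_0=-\tfrac12$, being automatic when $d$ is odd). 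So the goal becomes: produce nonnegative, not-all-zero coefficients $\delta_0,\dots,\delta_{\lfloor d/2\rfloor}$ with $\sum_i\delta_iN_i(\beta_0-\tfrac12)=0$.

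The key tool is the factorization already recorded in the proof of Theorem \ref{main}(a),
$$N_i(n-\tfrac12)=M_i(n)\prod_{k=1}^{i}\bigl(n^2-(k-\tfrac12)^2\bigr),\qquad 0\le i\le\lfloor d/2\rfloor-1,$$
together with the explicit forms of $N_{\lfloor d/2\rfloor}(n-\tfrac12)$. First I would pin down that each $M_i$ has only nonnegative coefficients with positive leading term: since $\prod_j(n+r_j)+\prod_j(n-r_j)=2\sum_{s\ \mathrm{even}}e_s(r)\,n^{\deg-s}$ with all the shifts $r_j>0$, one gets $M_i(\beta_0)>0$ for every $\beta_0>0$. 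In particular $N_0(\beta_0-\tfrac12)=M_0(\beta_0)>0$, while the sign of $N_i(\beta_0-\tfrac12)$ for $i\ge1$ is governed purely by how many of the factors $\beta_0^2-(k-\tfrac12)^2$, $k=1,\dots,i$, are negative.

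With this sign dictionary in hand the realization is short. Fix a non-half-integer $\beta_0\in(0,\lfloor d/2\rfloor-\tfrac12)$ and set $p=\lfloor\beta_0+\tfrac12\rfloor$; then $0\le p\le\lfloor d/2\rfloor-1$, and in $N_{p+1}(\beta_0-\tfrac12)$ exactly one factor (namely $k=p+1$) is negative, so $N_{p+1}(\beta_0-\tfrac12)<0$. Taking $\delta_0=-N_{p+1}(\beta_0-\tfrac12)$, $\delta_{p+1}=N_0(\beta_0-\tfrac12)$ and all other $\delta_i=0$ gives a genuine degree-$d$ SSNN polynomial with $g(\beta_0)=0$. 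The half-integer values $\beta_0=k_0-\tfrac12$ (including the right endpoint $\beta_0=\lfloor d/2\rfloor-\tfrac12$) are even easier, since there $N_{k_0}(\beta_0-\tfrac12)=0$ and the single-term polynomial $N_{k_0}$ already works; and $\beta_0=0$ for even $d$ is handled the same way, pairing $N_0(-\tfrac12)>0$ with $N_1(-\tfrac12)<0$. Reflecting through $\beta_0\mapsto-\beta_0$ then fills $[-\lfloor d/2\rfloor+\tfrac12,\lfloor d/2\rfloor-\tfrac12]$ for $g$, so that $\alpha_0$ ranges over $[-\lfloor d/2\rfloor,\lfloor d/2\rfloor-1]$ — precisely the sharp interval permitted by Theorem \ref{main}(a), which coincides with $[-\tfrac d2,\tfrac d2-1]$ when $d$ is even.

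I expect the main obstacle to be the careful sign bookkeeping at the two boundaries of the construction: confirming positivity of the $M_i$ on all of $(0,\infty)$ (not merely for large $\beta_0$, as was all that Theorem \ref{main}(a) actually used), and treating the middle polynomial $N_{\lfloor d/2\rfloor}$, whose shape depends on the parity of $d$, in the cases $p+1=\lfloor d/2\rfloor$ and $\beta_0=\lfloor d/2\rfloor-\tfrac12$ where the generic factorization above no longer applies. One must also check that every polynomial produced is honestly of degree $d$, i.e. that $\sum_i\delta_i>0$; this is automatic here because each construction keeps $\delta_0>0$ or uses a single nonzero $\delta_{k_0}$.
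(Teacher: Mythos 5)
Your proof is correct, but it follows a genuinely different route from the paper's. The paper exhibits, for each parity of $d$, an explicit one-parameter family ($f_0$ resp.\ $f_1$, with three resp.\ four nonzero symmetric $\delta_i$ clustered around the middle index) which, after factoring out its integer roots, reduces to a quadratic; a discriminant computation shows the two remaining roots are real, of the form $-\tfrac12\pm h(a)$ with $h$ monotone increasing to $\lfloor d/2\rfloor-\tfrac12$, so as $a$ varies the roots sweep out the open interval $(-\lfloor d/2\rfloor,\lfloor d/2\rfloor-1)$, and the endpoints are supplied separately by $\binom{n+k}{d}$ resp.\ $\binom{n+k+1}{d}+\binom{n+k}{d}$. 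You instead argue pointwise: for each target you find two basis polynomials $N_0$ and $N_{p+1}$ of opposite nonzero sign at the shifted target $\beta_0$ and cancel them with positive weights, reusing the factorization of $N_i(n-\tfrac12)$ from the proof of Theorem \ref{main}(a). Your sign bookkeeping is sound, including at the middle index that you flagged as the delicate spot: for even $d$ one has $N_{d/2}(n-\tfrac12)=\prod_{k=1}^{d/2}\bigl(n^2-(k-\tfrac12)^2\bigr)$ and for odd $d$ one has $N_{\lfloor d/2\rfloor}(n-\tfrac12)=2n\prod_{k=1}^{\lfloor d/2\rfloor}\bigl(n^2-(k-\tfrac12)^2\bigr)$, so the same sign dictionary applies there with $M=1$ or $M=2n$. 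Your approach is more modular --- no discriminants, and it makes transparent why the attainable real roots form exactly $[-\lfloor d/2\rfloor,\lfloor d/2\rfloor-1]$ --- at the cost of coefficients depending on the target point; the paper's buys clean uniform families with explicitly computed roots. One shared caveat: both arguments realize only $[-\lfloor d/2\rfloor,\lfloor d/2\rfloor-1]$, which for odd $d$ is strictly smaller than the stated $[-\tfrac d2,\tfrac d2-1]$, and by Theorem \ref{main}(a) the larger interval is genuinely unattainable for odd $d$; so the theorem should be read with floors, as you correctly note.
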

\begin{proof}
First, let us consider the case where $d$ is even. Let $k=\frac{d}{2}$ and 
$$f_0(n)=\binom{n+k+1}{d}+a\binom{n+k}{d}+\binom{n+k-1}{d},$$ 
where $a$ is a real numer with $a \geq \frac{2(2k+1)}{2k-1}$. 
Then $f_0(n)$ is an SSNN polynomial of degree $d$. Let 
$$g_0(n)=\frac{d!}{\prod_{j=-k+2}^{k-1}(n+j)}f_0(n)
=(a+2)n^2+(a+2)n+ak(-k+1)+2k^2.$$ 
From $a \geq \frac{2(2k+1)}{2k-1}$, we have $$D(g_0(n))=(2k-1)(a+2)((2k-1)a-2(2k+1)) \geq 0.$$ 
Thus both of the roots of $g_0(n)$ are real numbers and those are 
\begin{eqnarray*}
n &=& \frac{-(a+2) \pm \sqrt{(2k-1)(a+2)((2k-1)a-2(2k+1))}}{2(a+2)} \\
&=& -\frac{1}{2} \pm \frac{1}{2}\sqrt{\frac{(2k-1)((2k-1)a-2(2k+1))}{a+2}} \\
(&=:& -\frac{1}{2} \pm h_0(a) \;). 
\end{eqnarray*}
Now the function $h_0(a)$ on $a$ is monotone increasing and 
$$\lim_{a \to +\infty} h_0(a) = \frac{2k-1}{2}=k-\frac{1}{2}.$$ 
Hence, for $a \geq \frac{2(2k+1)}{2k-1}$, all the roots of each $f_0(n)$ 
are contained in the open interval $(-k, k-1)$. 
Moreover, $-k$ and $k-1$ are roots of $\binom{n+k}{d}$, 
which is an SSNN polynomial of degree $d$. 

Next, let us consider the case where $d$ is odd. Let $k=\frac{d-1}{2}$ and 
$$f_1(n)=\binom{n+k+2}{d}+a\binom{n+k+1}{d}+a\binom{n+k}{d}+\binom{n+k-1}{d},$$ 
where $a$ is a real number with $a \geq \frac{12k^2+12k-1}{(2k-1)^2}$. 
Then $f_1(n)$ is an SSNN polynomial of degree $d$. Let 
$$g_1(n)=\frac{d!}{(2n+1) \prod_{j=-k+2}^{k-1}(n+j)}f_2(n) 
=(a+1)n^2+(a+1)n+ak(-k+1)+3k(k+1).$$ 
From $a \geq \frac{12k^2+12k-1}{(2k-1)^2}$, we have $$D(g_1(n))=(a+1)((2k-1)^2a-(12k^2+12k-1)) \geq 0. $$ 
Thus both of the roots of $g_1(n)$ are real numbers and those are 
\begin{eqnarray*}
n &=& \frac{-(a+1) \pm \sqrt{(a+1)((2k-1)^2a-(12k^2+12k-1))}}{2(a+1)} \\
&=& -\frac{1}{2} \pm \frac{1}{2}\sqrt{\frac{(2k-1)^2a-(12k^2+12k-1)}{a+1}} \\
(&=:& -\frac{1}{2} \pm h_1(a) \;). 
\end{eqnarray*}
Now the function $h_1(a)$ on $a$ is monotone increasing and 
$$\lim_{a \to +\infty} h_1(a) =\frac{2k-1}{2}=k-\frac{1}{2}.$$ 
Hence, for $a \geq \frac{12k^2+12k-1}{(2k-1)^2}$, all the roots of each $f_1(n)$ 
are contained in the open interval $(-k, k-1)$. 
Moreover, $-k$ and $k-1$ are roots of 
$\binom{n+k+1}{d}+\binom{n+k}{d}$, which is an SSNN polynomial of degree $d$. 
\end{proof}


\begin{thebibliography}{99}
\label{sec:references}

\bibitem{Batyrev}
V. Batyrev, 
Dual polyhedra and mirror symmetry for Calabi--Yau 
hypersurfaces in toric varieties, 
{\em J. Algebraic Geom.} {\bf 3} (1994), 493--535.

\bibitem{BDDPS}
M. Beck, J. A. De Loera, M. Develin, J. Pfeifle
and R. P. Stanley,
Coefficients and roots of Ehrhart polynomials,
{\em Contemp. Math.} {\bf 374} (2005), 15--36.

\bibitem{BeckRobins}
M. Beck and S. Robins,
``Computing the Continuous Discretely,''
Undergraduate Texts in Mathematics,
Springer, 2007.

\bibitem{BHW}
C. Bey, M. Henk and J. M. Wills, 
Notes on the roots of Ehrhart polynomials,
{\em Discrete Comput. Geom.} {\bf 38} (2007), 81--98.

\bibitem{Bra}
B. Braun, Norm bounds for Ehrhart polynomial roots,
{\em Discrete Comput. Geom.} {\bf 39} (2008), 191--193. 

\bibitem{BD}
B. Braun and M. Develin,
Ehrhart polynomial roots and Stanley's non-negativity theorem,
{\em Contemp. Math.} {\bf 452} (2008), 67--78. 

\bibitem{Ehrhart}
E. Ehrhart, ``Polyn\^{o}mes Arithm\'{e}tiques et
M\'{e}thode des Poly\`{e}dres en Combinatoire,''
Birkh\"{a}user, Boston/Basel/Stuttgart, 1977.


\bibitem{HK10}
G. Heged\"{u}s and A. M. Kasprzyk, 
Roots of Ehrhart polynomials of smooth Fano polytopes, 
{\em Discrete Comput. Geom.} {\bf 46} (2011), 488--499. 


\bibitem{HibiRedBook}
T. Hibi,
``Algebraic Combinatorics on Convex Polytopes,"
Carslaw Publications, Glebe, N.S.W.,
Australia, 1992.

\bibitem{Hibidual}
T. Hibi, 
Dual polytopes of rational convex polytopes,
{\em Combinatorica} {\bf 12} (1992), 237--240.

\bibitem{HHO}
T. Hibi, A. Higashitani and H. Ohsugi, 
Roots of Ehrhart polynomials of Gorenstein Fano polytopes. 
{\em Proc. Amer. Math. Soc.} {\bf 139} (2011), no.10, 3727--3734




\bibitem{counter}
A. Higashitani, 
Counterexamples of the Conjecture on Roots of Ehrhart Polynomials, 
{\em Discrete Comput. Geom} {\bf 47} (2012), 618--623. 

\bibitem{KS98}
M. Kreuzer and H. Skarke, 
Classification of reflexive polyhedra in three dimensions, 
{\em Adv. Theor. Math. Phys.} {\bf 2} (1998), no.4 853--871. 

\bibitem{KS00}
M. Kreuzer and H. Skarke, 
Complete classification of reflexive polyhedra in four dimensions, 
{\em Adv. Theor. Math. Phys.} {\bf 4} (2000), no.6 1209--1230. 


\bibitem{MHNOH}
T. Matsui, A. Higashitani, Y. Nagazawa, H. Ohsugi and T. Hibi, 
Roots of Ehrhart polynomials arising from graphs, 
{\em J. Algebr. Comb.} {\bf 34} (2011), no.4, 721--749. 

\bibitem{OS}
H. Ohsugi and K. Shibata, 
Smooth Fano polytopes whose Ehrhart polynomial has a root with large real part, 
{\em Discrete Comput. Geom} {\bf 47} (2012), 618--623. 


\bibitem{Pfe}
J. Pfeifle,
Gale duality bounds for roots of polynomials 
with nonnegative coefficients,
{\em J. Comb. Theory Ser. A} {\bf 117} (2010), no.3 248--271. 



\bibitem{StanleyDRCP}
R. P. Stanley, Decompositions of rational convex polytopes,
{\em Annals of Discrete Math.} {\bf 6} (1980), 333--342.



\end{thebibliography}
\end{document}